\documentclass[11pt, reqno]{amsart}
\usepackage{amsmath,amsopn,amssymb,amsthm,multicol}
\language=0
\usepackage[cp1251]{inputenc}
\usepackage[english,russian]{babel}
\usepackage[breaklinks=true,colorlinks=true,linkcolor=blue,citecolor=blue,urlcolor=blue]{hyperref}
\usepackage[dvips]{graphicx}

\renewenvironment{proof}[1][Proof]{\textbf{#1.} } {\ \rule{0.5em}{0.5em}}

\textwidth 155mm
\textheight 215mm

\voffset -1.1cm
\hoffset -1.5cm

\newtheorem{theorem}{Theorem}

\newtheorem{lem}{Lemma}
\newtheorem{cor}{Corollary}

\newtheorem{remark}{Remark}

\begin{document}
\selectlanguage{english}

\title[On topological structure of some sets \dots] {On topological structure of some sets related to the normalized Ricci flow on generalized Wallach spaces}
\author{N.\,A.~Abiev}
\address{N.\,A.~Abiev\newline
Taraz State University after M.\,Kh.~Dulaty, \newline
Taraz, Tole~bi str., 60, 080000, KAZAKHSTAN}
\email{abievn@mail.ru}

\begin{abstract} We study topological structures of the sets $(0,1/2)^3 \cap \Omega$ and $(0,1/2)^3 \setminus \Omega$,
where~$\Omega$ is one special algebraic surface defined by a symmetric polynomial in variables  $a_1,a_2,a_3$ of degree~$12$.
These problems arise in studying of general properties of degenerate singular points of
dynamical systems obtained from the normalized Ricci flow  on generalized Wallach spaces.
Our main goal is to prove the connectedness of $(0,1/2)^3 \cap \Omega$
and to determine the number of connected components of $(0,1/2)^3 \setminus \Omega$.

\vspace{2mm} \noindent Key words and phrases:
Riemannian metric, generalized Wallach space, normalized Ricci flow,
dynamical system, degenerate singular point of dynamical system,
real algebraic surface, singular point of real algebraic surface.

\vspace{2mm}

\noindent {\it 2010 Mathematics Subject Classification:} 53C30, 53C44, 37C10, 34C05, 14P05, 14Q10.
\end{abstract}

\maketitle

\section*{Introduction and the main result}

It is known  that determining the connectedness (or the number of connected components)
of real algebraic surfaces is a very hard classical problem in algebraic geometry (see e.~g. \cite{Basu}, \cite{Silhol}).
In this paper we deal with similar problems relating to the normalized Ricci flow on generalized Wallach spaces.
The importance of these problems is due to the need to develop a special apparatus for studying general
properties of degenerate singular points of Ricci flows initiated in \cite{AANS}--\cite{AANS3}.
More concretely, in the above papers, the authors considered some problems concerning the topological structure of the sets
$(0,1/2)^3 \cap \Omega$ and $(0,1/2)^3 \setminus \Omega$,
where
\begin{equation*}\label{surf_Omega}
\Omega =\{(a_1,a_2,a_3)\in\mathbb{R}^3 \, | \,  Q(a_1,a_2,a_3)=0\}
\end{equation*}
is an algebraic surface (see Fig. \ref{singsur} and \ref{singsur_new}) in $\mathbb{R}^3$
defined by a symmetric polynomial $Q(a_1,a_2,a_3)$ in $a_1,a_2,a_3$
of degree $12$:
\begin{eqnarray}\label{singval2}\notag
Q(a_1,a_2,a_3)\,=\,
(2s_1+4s_3-1)(64s_1^5-64s_1^4+8s_1^3+12s_1^2-6s_1+1\\\notag
+240s_3s_1^2-240s_3s_1-1536s_3^2s_1-4096s_3^3+60s_3+768s_3^2)\\
-8s_1(2s_1+4s_3-1)(2s_1-32s_3-1)(10s_1+32s_3-5)s_2\\\notag
-16s_1^2(13-52s_1+640s_3s_1+1024s_3^2-320s_3+52s_1^2)s_2^2\\\notag
+64(2s_1-1)(2s_1-32s_3-1)s_2^3+2048s_1(2s_1-1)s_2^4,
\end{eqnarray}
$$
s_1 = a_1+a_2+a_3, \quad s_2 = a_1a_2+a_1a_3+a_2a_3, \quad s_3 = a_1a_2a_3.
$$

\begin{center}
\begin{figure}[t]
\centering\scalebox{1}[1]{
\includegraphics[angle=-90,totalheight=2.6in]{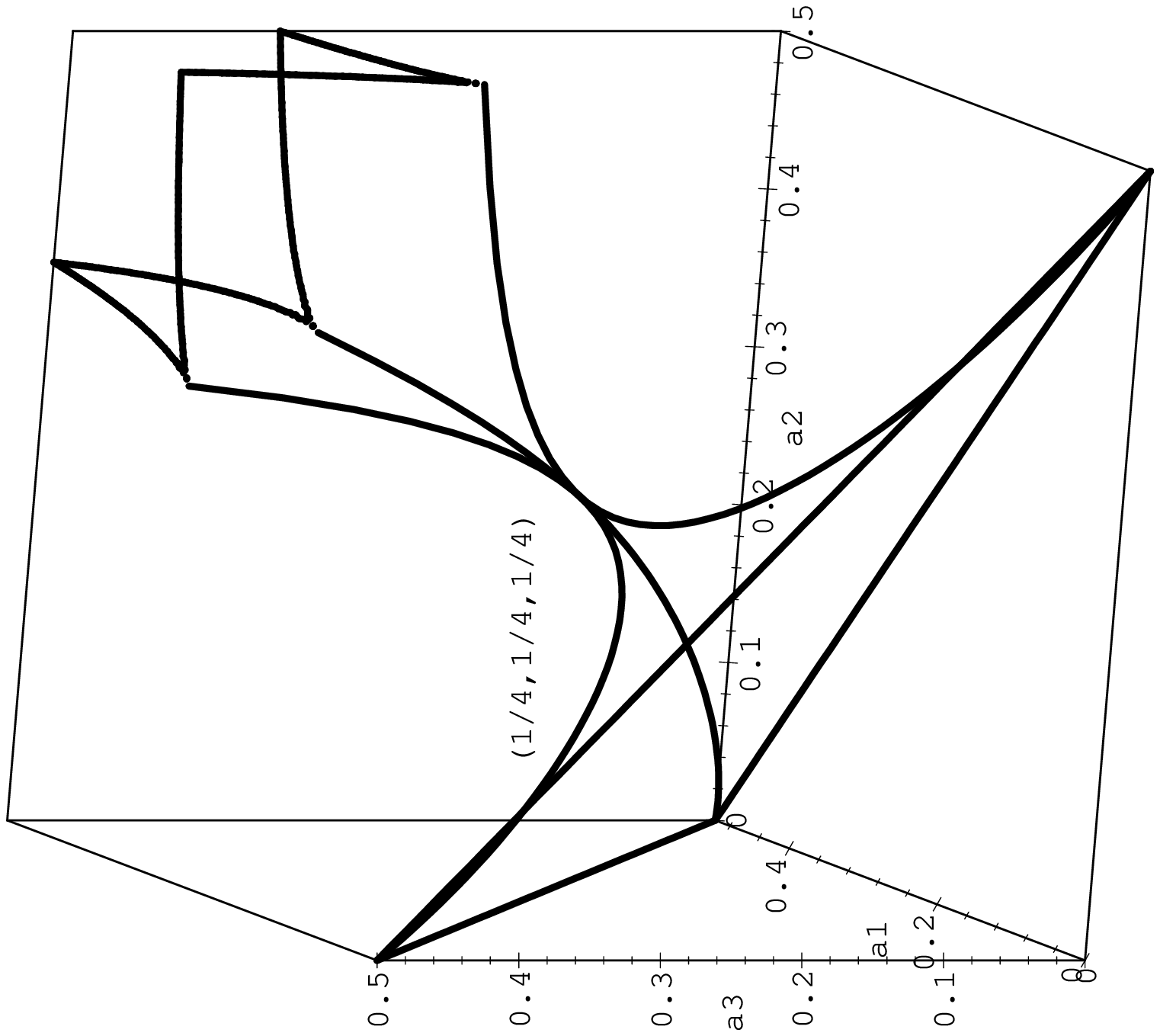}
\includegraphics[angle=-90,totalheight=2.6in]{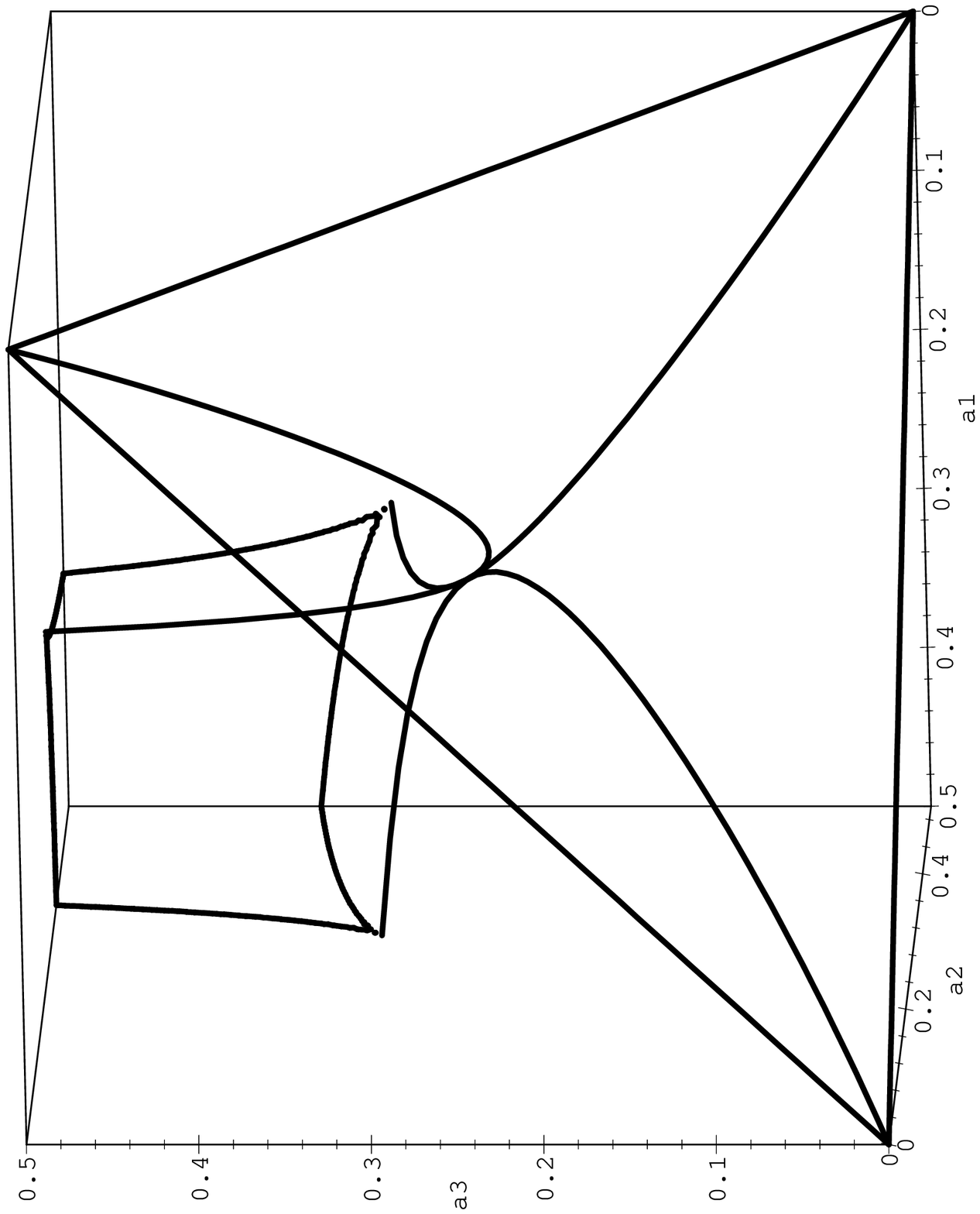}}
\caption{Singular points of the surface $(0,1/2)^3 \cap \Omega$}
\label{singsur}
\end{figure}
\end{center}

The surface $\Omega$ naturally arises in studying of general properties of degenerate singular points of the following
dynamical system (see \cite{AANS}--\cite{AANS3}):
\begin{equation}
\label{three_equat}
\dfrac {dx_1}{dt} = f(x_1,x_2,x_3), \quad
\dfrac {dx_2}{dt}=g(x_1,x_2,x_3), \quad
\dfrac {dx_3}{dt}=h(x_1,x_2,x_3),
\end {equation}
where  $x_i=x_i(t)>0$ $(i=1,2,3)$,
\begin{eqnarray*}
f(x_1,x_2,x_3)&=&-1-a_1x_1 \left( \dfrac {x_1}{x_2x_3}-  \dfrac {x_2}{x_1x_3}- \dfrac {x_3}{x_1x_2} \right)+x_1B,\\
g(x_1,x_2,x_3)&=&-1-a_2x_2 \left( \dfrac {x_2}{x_1x_3}- \dfrac {x_3}{x_1x_2} -  \dfrac {x_1}{x_2x_3} \right)+x_2B,\\
h(x_1,x_2,x_3)&=&-1-a_3x_3 \left( \dfrac {x_3}{x_1x_2}-  \dfrac {x_1}{x_2x_3}- \dfrac {x_2}{x_1x_3} \right)+x_3B,
\end{eqnarray*}
$$
B:=\left( \dfrac {1}{a_1x_1}+\dfrac {1}{a_2x_2}+\dfrac {1}{a_3x_3}- \left( \dfrac {x_1}{x_2x_3}+
\dfrac {x_2}{x_1x_3}+ \dfrac {x_3}{x_1x_2} \right) \right)
\left( \frac{1}{a_1} +\frac{1}{a_2}+ \frac{1}{a_3} \right)^{-1}.
$$
$$
a_i \in (0,1/2] \quad (i=1,2,3).
$$

\begin{center}
\begin{figure}[t]
\centering\scalebox{1}[1]{
\includegraphics[angle=0,totalheight=3.5in]{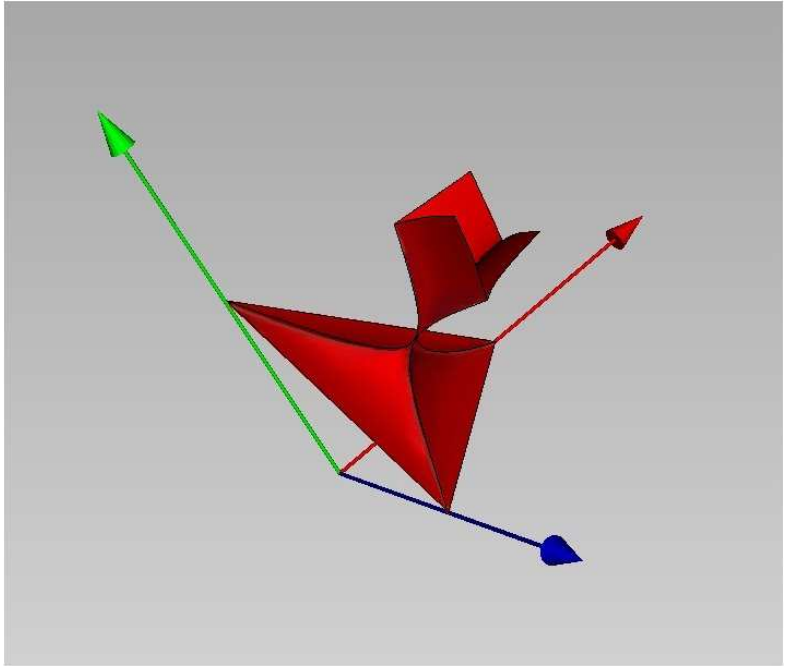}
}
\caption{The surface $(0,1/2)^3 \cap \Omega$}
\label{singsur_new}
\end{figure}
\end{center}

It should be noted that the system (\ref{three_equat}) can be obtained
from the normalized Ricci flow equation
$$
\dfrac {\partial}{\partial t} \bold{g}(t) = -2 \operatorname{Ric}_{\bold{g}}+ 2{\bold{g}(t)}\frac{S_{\bold{g}}}{n},
$$
where $\bold{g}(t)$ means a $1$-parameter family of Riemannian metrics,
$\operatorname{Ric}_{\bold{g}}$ is the Ricci tensor and $S_{\bold{g}}$ is the scalar curvature of the Riemannian metric ${\bold{g}}$,
considered on one special class of compact homogeneous spaces
called three-locally-symmetric or generalized Wallach spaces, see \cite{Lomshakov1}, \cite{Nikonorov1}.
In the recent papers \cite{CKL} and \cite{Nikonorov4}, the complete classification of these spaces was obtained.

A more detailed information concerning  geometric aspects of this problem and the Ricci flows
could be found in  \cite{Lomshakov1},\cite{Nikonorov2}, \cite{ChowKnopf} and \cite{Topping}.
\medskip

In  \cite{AANS}, the authors noted that {\it the set  $(0,1/2)^3 \cap \Omega$ is connected, and the set $(0,1/2)^3\setminus \Omega$
consists of three connected components  $O_1$, $O_2$ and $O_3$ (see Fig. \ref{singsur})
containing the points
$(1/6,1/6,1/6)$, $(7/15,7/15,7/15)$ and $(1/6, 1/4, 1/3)$ respectively.}

\smallskip
The present work is devoted to detailed proof of this observation. The main result is the following

\begin{theorem}\label{main_thm}
The following assertions hold with respect to the standard  topology of $\mathbb{R}^3$:
\begin{enumerate}
\item
The set $(0,1/2)^3 \cap \Omega$ is connected;
\item
The set  $(0,1/2)^3\setminus \Omega$ consists of three connected components.
\end{enumerate}
\end{theorem}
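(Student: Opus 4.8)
The plan is to exploit the full $S_3$-symmetry of $Q$ throughout, reducing every global question to analysis on the diagonal $a_1=a_2=a_3$, on the three symmetry planes $a_i=a_j$, and on the fundamental domain $D=\{(a_1,a_2,a_3)\in(0,1/2)^3 : a_1\le a_2\le a_3\}$. Since $\Omega$ is invariant under coordinate permutations, the connected components of the complement are permuted by $S_3$; the two diagonal reference points are fixed by the whole group, so the components $O_1$ and $O_2$ must be $S_3$-invariant, whereas the off-diagonal point $(1/6,1/4,1/3)$ has a six-element orbit. Thus proving that the complement has exactly three components amounts to (a) separating the two diagonal regions from each other and from the rest, and (b) showing that all six permuted copies of the off-diagonal region fuse into a single connected set $O_3$.

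First I would carry out the one-dimensional reduction: substituting $a_1=a_2=a_3=a$ gives $s_1=3a$, $s_2=3a^2$, $s_3=a^3$ and turns $Q$ into a single-variable polynomial $q(a)$. Locating the roots of $q$ in $(0,1/2)$ pins down where $\Omega$ meets the diagonal and separates $(1/6,1/6,1/6)$ from $(7/15,7/15,7/15)$ along the diagonal by the intermediate value theorem applied to $Q$. Next I would treat the two-dimensional reduction on a symmetry plane, say $a_1=a_2=u$, $a_3=v$: here $Q$ becomes a polynomial $C(u,v)$ whose zero set is a plane curve inside the square $(0,1/2)^2$. Determining the branches of this curve, the way it partitions the square, and its singular points (where $C=\partial_u C=\partial_v C=0$) is the heart of the argument, since the traces of the complement components on each symmetry plane already reveal the adjacency pattern of $O_1$, $O_2$, $O_3$.

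To pass from these slices to the full three-dimensional statement I would use a sweeping (Morse-type) argument: project $\Omega\cap(0,1/2)^3$ onto one axis, say the $a_3$-axis, and track the topology of the planar slice $\{Q(a_1,a_2,c)=0\}\subset(0,1/2)^2$ as the level $c$ increases. The topology of a slice can change only at the finitely many critical values of the projection, namely the $a_3$-coordinates of the critical points of $Q$ restricted to $\Omega$ and of the singular points of $\Omega$; I would locate these by solving the system $Q=0$, $\nabla Q=0$ (again organizing the computation by symmetry, since most such points lie on the diagonal or on the symmetry planes), classify them locally, and verify that between consecutive critical values the slice deforms without changing its combinatorial type. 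Connectedness of the surface then follows by exhibiting, on each slice and across critical values, explicit paths on $\Omega$ joining its pieces; the three complement components are identified by computing the sign of $Q$ at the reference points, by showing that each of $O_1$, $O_2$, $O_3$ is path-connected via explicit arcs (for $O_3$ these arcs must wind among the six permuted regions), and by showing that any path joining points of two different regions must meet $\Omega$.

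The main obstacle is the global assembly and, above all, the singular points of $\Omega$. Local smoothness would make the sweeping argument routine, but the figures indicate genuine singular points, where the surface may self-intersect or pinch; at such points a careless slicing argument can either spuriously merge or spuriously split components. The decisive technical work is therefore to enumerate all singular points of $Q=0$ inside the cube, to prove that this set is exactly what the symmetry analysis predicts (in particular that no unexpected singularity hides off the diagonal and off the symmetry planes), and to control the local structure of $\Omega$ at each of them precisely enough to decide, at every critical value of the sweep, whether adjacent regions communicate. Equivalently, one may phrase the whole proof as a cylindrical algebraic decomposition of the cube adapted to $Q$; the degree-$12$ polynomial makes a direct decomposition impractical by hand, so the symmetry reduction to the diagonal and the symmetry planes is not merely convenient but essential to bring the computation within reach.
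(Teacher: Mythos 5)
There is a genuine gap: your proposal is a strategy outline in which every decisive step is named but none is carried out, and the steps you defer are precisely the hard ones. Concretely, your Morse-type sweep requires (i) enumerating \emph{all} solutions of $Q=0$, $\nabla Q=0$ inside the cube for a degree-$12$ polynomial in three variables, (ii) computing the critical values of the projection of $\Omega$ onto the $a_3$-axis, and (iii) verifying that the planar slices do not change combinatorial type between consecutive critical values; you do none of these, and you yourself concede the computation is ``impractical by hand.'' In addition, your reduction of assertion (2) to the two tasks (a) and (b) presupposes what has to be proved: it assumes the complement consists exactly of two diagonal regions plus six permuted copies of an off-diagonal region, whereas a priori there could be further components invisible to the diagonal and the symmetry planes. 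The claim that the six permuted copies ``fuse'' into one set $O_3$ is likewise asserted, not established --- no connecting arcs are exhibited. So as it stands the proposal does not prove either assertion; it is a plan whose feasibility is exactly the open question.

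The paper's proof shows what is missing: an idea that makes the computation collapse. Instead of sweeping by planes $a_3=c$, it sweeps by the rays $a_1=at$, $a_2=bt$, $a_3=t/2$, $t\in[0,1]$, $(a,b)\in(0,1/2)^2$ (which, by the symmetry you correctly invoke, suffices to cover the cube). Along each ray, $\Omega$ is cut out by a single univariate polynomial $p(t)=Q(at,bt,t/2)$, and the whole problem becomes: how many roots does $p$ have in $[0,1]$ as $(a,b)$ varies? The number of roots can change only when a root crosses $t=0$ (impossible, since $p(0)=-1$), crosses $t=1$ (which happens exactly on the explicit curve $\Gamma$: $p(1)=-4(a+b)^2G(a,b)$), or collides with another root; the discriminant of $p$ factors, by a Maple computation, as
\begin{equation*}
(2b-1)^{12}(2a-1)^{12}(a-b)^{12}\bigl(F(a,b)\bigr)^2
\end{equation*}
with $F$ an explicit cubic shown to be nonvanishing on the square, so collisions occur only on the line $a=b$, where $p$ factors explicitly and one checks no root is a local extremum. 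Hence the root count is constant on each of the two regions $K_1$, $K_2$ cut out by $\Gamma$, and evaluating at one sample point in each gives the answer: one intersection point per ray over $K_1$, two over $K_2\cup\Gamma$. This exhibits $\Omega\cap(0,1/2)^3$ as two graphs over connected parameter regions meeting only at the umbilic $(1/4,1/4,1/4)$, which gives both connectedness and the count of three complementary components --- all without ever solving $\nabla Q=0$ on the full surface. Your plan, by contrast, stays with the three-dimensional singular-point enumeration that this radial reduction was designed to avoid.
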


We note also the following

\begin{cor}\label{main_cor} The assertions of Theorem \ref{main_thm} are preserved  if
$(0,1/2)^3$ is replaced by  $(0,1/2]^3$.
\end{cor}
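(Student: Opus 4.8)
The plan is to treat the two assertions of Corollary \ref{main_cor} separately, in each case reducing the statement for the half-open cube $(0,1/2]^3$ to the corresponding statement for the open cube $(0,1/2)^3$ furnished by Theorem \ref{main_thm}. The only new points are those lying on one of the three boundary faces $F_i=\{a\in(0,1/2]^3 : a_i=1/2\}$, $i=1,2,3$, and since $Q$ is symmetric in $a_1,a_2,a_3$ it suffices to analyse a single face, say $F_3$, and then transport all conclusions by permuting coordinates.

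For the second assertion I would argue by a push-in deformation. Fix the interior basepoint $c=(1/4,1/4,1/4)$ and, for $\lambda\in(0,1)$, consider the contraction $p\mapsto c+\lambda(p-c)$. A coordinatewise estimate shows that the $k$-th coordinate of the image equals $1/4+\lambda(p_k-1/4)\in(1/4-\lambda/4,\,1/4+\lambda/4)\subset(0,1/2)$, so this map sends all of $(0,1/2]^3$ into the \emph{open} cube $(0,1/2)^3$. Since $\Omega$ is closed and any path in $(0,1/2]^3\setminus\Omega$ has compact image at positive distance from $\Omega$, for $\lambda$ close enough to $1$ the contracted path stays off $\Omega$; hence any path in $(0,1/2]^3\setminus\Omega$ joining two interior points may be replaced by one in $(0,1/2)^3\setminus\Omega$ joining nearby (hence same-component) interior points. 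It follows that every point of $(0,1/2]^3\setminus\Omega$ is joined inside $(0,1/2]^3\setminus\Omega$ to one of $O_1,O_2,O_3$, giving at most three components, while no two of the $O_j$ can merge, since a joining path would push in to a joining path in $(0,1/2)^3\setminus\Omega$, contradicting Theorem \ref{main_thm}(2). Thus there are exactly three components.

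For the first assertion I would use the elementary fact that if $A$ is connected and $A\subseteq B\subseteq \overline{A}$, then $B$ is connected. With $A=(0,1/2)^3\cap\Omega$ (connected by Theorem \ref{main_thm}(1)) and $B=(0,1/2]^3\cap\Omega$, it remains to prove the inclusion $B\subseteq\overline{A}$, i.e. that every point of $\Omega$ lying on a face $F_i$ is a limit of points of $\Omega$ from the interior. Substituting $a_3=1/2$ and writing $p=a_1+a_2$, $q=a_1a_2$, one checks that the factors $2s_1-1$ and $2s_1+4s_3-1$ reduce to $2p$ and $2(p+q)$ and hence never vanish on $F_3$, so $Q(a_1,a_2,1/2)$ is a nontrivial two-variable polynomial whose zero set is a curve $\mathcal{C}\subset F_3$. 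The inclusion then follows once one verifies that along $\mathcal{C}$ the surface $\Omega$ actually enters the region $a_3<1/2$: at a point $(a_1^0,a_2^0,1/2)\in\mathcal{C}$ with $\partial Q/\partial a_3\neq0$ one writes $\Omega$ locally as a graph $a_3=\phi(a_1,a_2)$ via the implicit function theorem and checks that $\phi$ is not minimized at $(a_1^0,a_2^0)$, so that interior points with $a_3<1/2$ accumulate at the face point.

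The principal obstacle is precisely this last verification: ruling out that $\Omega$ is tangent to a face from the outside and meets it only at an isolated point, which would produce a point of $B$ not attainable from $A$ and could disconnect $(0,1/2]^3\cap\Omega$. This is a genuinely computational question about the sign of $Q(a_1,a_2,a_3)$ for $a_3$ slightly below $1/2$ near $\mathcal{C}$, together with the treatment of the finitely many exceptional points where $\partial Q/\partial a_3=0$ (including the edges where two faces meet and the corner $(1/2,1/2,1/2)$), and I expect the explicit description of $\Omega$ obtained in the proof of Theorem \ref{main_thm} to be the main tool for settling it. By contrast, the push-in argument for the second assertion is soft and should go through with no input beyond Theorem \ref{main_thm}(2).
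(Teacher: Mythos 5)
Your treatment of assertion (2) is correct, and it takes a genuinely different route from the paper. The paper proves the whole corollary by extending the segment--sweep of Remark \ref{Idea} to the boundary of the parameter square: by Theorem \ref{main_thm} and Remark \ref{symm_Omega} only the segments with $b=1/2$ remain to be treated, and for these Lemma \ref{number_ofRoots_of_p(t)_b=1/2} factors $p(t)=-(2ta+1)\,p_2(t)\,p_3^3(t)$ and counts the roots in $[0,1]$ (one root for $a\in(0,\sqrt2/4)$, two for $a\in\left[\sqrt2/4,1/2\right)$, one root of multiplicity $8$ at $a=1/2$, corresponding to the umbilic $(1/4,1/4,1/4)$), after which both assertions follow as in the proof of the theorem. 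Your push-in contraction toward $(1/4,1/4,1/4)$ instead derives the component count purely topologically from Theorem \ref{main_thm}(2), the closedness of $\Omega$, and convexity of the cube, with no new computation at all; this is softer and, for this half of the statement, arguably cleaner than the paper's root counting. (Two small points you should make explicit: components and path components coincide because $(0,1/2]^3\setminus\Omega$ is open in the locally path-connected space $(0,1/2]^3$; and the endpoints of the pushed-in path land in $O_i$ and $O_j$ because these sets are open and the endpoints converge to points of $O_i$, $O_j$ as $\lambda\to1$.)

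For assertion (1), however, your proposal has a genuine gap, which you yourself flag. The reduction via ``$A\subseteq B\subseteq\overline A$ with $A$ connected implies $B$ connected'' is sound, but everything then rests on the density claim $(0,1/2]^3\cap\Omega\subseteq\overline{(0,1/2)^3\cap\Omega}$, which you do not prove; and this claim is precisely where the mathematical content of the corollary sits. It is not a routine implicit-function-theorem exercise: the trace of $\Omega$ on the face $a_3=1/2$ is the curve $\Gamma$ of the paper, which has a cusp at $(\tilde a,\tilde a,1/2)$ with $\tilde a=(\sqrt5-1)/4$ --- a singular point where your graph representation $a_3=\phi(a_1,a_2)$ is unavailable --- and its endpoints $\left(\sqrt2/4,1/2,1/2\right)$, $\left(1/2,\sqrt2/4,1/2\right)$ lie on edges where two faces meet, so the ``finitely many exceptional points'' you defer are exactly the delicate ones. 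In the paper this information is supplied by the explicit boundary factorization of Lemma \ref{number_ofRoots_of_p(t)_b=1/2} together with the robustness analysis of Lemma \ref{number_ofRoots_of_p(t)}: as $(a,b)$ approaches $\Gamma$ from $K_2$, the second root of $p(t)$ tends to $t=1$, which shows that the ``upper'' sheet of $\Omega$ reaches the face through interior points rather than touching it from outside. Until you carry out that verification (or an equivalent sign analysis of $Q$ near the face curve), your proof of assertion (1) is incomplete.
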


\begin{remark}\label{symm_Omega}
The symmetry of $Q$ with respect to $a_1,a_2,a_3$
implies the invariance of $\Omega$ under the permutation
$a_1\rightarrow a_2 \rightarrow a_3\rightarrow a_1$.
\end{remark}

\begin{remark}\label{Idea}
Proof of Theorem \ref{main_thm} is based on the idea of Remark 8 in \cite{AANS2}:
One should consider a segment $I$ with one endpoint at $(0,0,0)$ and with  the second endpoint at
an arbitrary point of any facet of the cube  $(0,1/2)^3$ containing  $(1/2,1/2,1/2)$.
According to Remark \ref{symm_Omega},
we can assume without loss of generality that $I$ is defined by the following parametric equations
\begin{equation} \label{parametrization}
a_1:=at, \quad a_2:=bt, \quad a_3:=t/2,
\end{equation}
where $t\in [0,1]$, \, $a,b\in (0,1/2)$.
Substituting  (\ref{parametrization}) into (\ref{singval2}) we obtain some polynomial $p(t):=Q(at,bt,t/2)$ in $t$ of degree $12$.
Thus the problems under consideration could be reduced to the problem of determining the possible number
of roots of $p(t)$ in $[0,1]$ when $(a,b)\in (0,1/2)^2$.
\end{remark}

\section{Proof of the main result}

Using Maple we have the following explicit expression for $p(t)$:
\begin{eqnarray} \notag \label{polynom12}
p(t)=-256b^2a^2(2a+1)^2(2b+1)^2(b+a)^2t^{12}+32(16b^3a^3\\ \notag
+4b^3a^2+2b^3a+2b^3+8b^2a^2+b^2a+4b^2a^3+2ba^3+ba^2+2a^3)\\ \notag
(2a+1)(2b+1)(2b+1+2a)(b+a)t^{10}\\ \notag
-32(2a+1)(2b+1)(b+a)(16b^3a^3+4b^3a^2+2b^3a+2b^3+8b^2a^2\\ \notag
+b^2a+4b^2a^3+2ba^3+ba^2+2a^3)t^9\\ \notag
-(72b^2a^2+104ba^3+208b^3a^2+104b^3a+208b^2a^3+52b^4+176a^4b+208b^4a^2\\ \notag
+176b^4a+52ba^2+52b^2a+208a^4b^2+52a^4+352b^3a^3+13b^2\\ \notag
+13a^2+44a^3+44b^3+22ba)(2b+1+2a)^2t^8\\ \notag
+2(2b+1+2a)(72b^2a^2+104ba^3+208b^3a^2+104b^3a+208b^2a^3+52b^4\\ \notag
+176a^4b+208b^4a^2+176b^4a+52ba^2+52b^2a+208a^4b^2+52a^4\\
+352b^3a^3+13b^2+13a^2+44a^3+44b^3+22ba)t^7\\ \notag
+(600b^2a^2+392ba^3+784b^3a^2+392b^3a+784b^2a^3+108b^4+14b+14a\\ \notag
+128a^6+448ba^5+224a^5+528a^4b+432b^4a^2+528b^4a+196ba^2\\ \notag
+196b^2a+432a^4b^2+108a^4+288b^3a^3+224b^5+448b^5a+128b^6\\ \notag
+2+27b^2+27a^2+36a^3+36b^3+66ba)t^6\\ \notag
-6(8b^3+4b^2a+2b^2+8ba+b+4ba^2+2a^2+8a^3+1+a)(2b+1+2a)^2t^5\\ \notag
+(2b+1+2a)(40b^3+24ba+5+40a^3)t^4\\ \notag
+(22b+22a+88ba^2+88b^2a+2+44b^2+44a^2+16a^3+16b^3+80ba)t^3\\ \notag
-6(2b+1+2a)^2t^2+(8a+8b+4)t-1.
\end{eqnarray}

Consider the following set
$$
K:=\left\{(a,b)\in \mathbb{R}^2~|~a,b\in (0,1/2)\right\}.
$$
\begin{center}
\begin{figure}[t]
\centering
\scalebox{1}[1]{\includegraphics[angle=-90,totalheight=3in]
{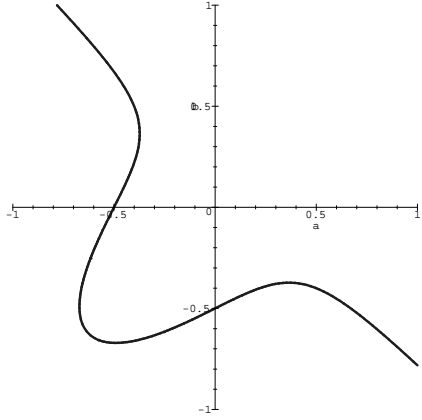}}
\caption{The curve $\gamma$}
\label{F}
\end{figure}
\end{center}

\begin{lem} \label{Discrim_of_p(t)}
If $(a,b)\in K$  then the discriminant $D$ of the polynomial $p(t)$ equals to zero if and only if $a=b$.
\end{lem}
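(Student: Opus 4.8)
The plan is to treat $D$ as a polynomial in the two parameters $a,b$ (the coefficients of $p$ are polynomials in $a,b$, so $D=\operatorname{disc}_t p$ is as well) and to locate its real zero set inside $K$. The first thing I would record is a symmetry: since $Q$ is symmetric in $a_1,a_2,a_3$, interchanging $a$ and $b$ in the parametrization \eqref{parametrization} merely swaps $a_1=at$ with $a_2=bt$, so it leaves $p(t)$ unchanged; consequently $D(a,b)=D(b,a)$. I would then compute $D$ with Maple and verify the univariate identity $D(a,a)\equiv 0$, which says that $p$ has a multiple root whenever $a=b$. This already settles the ``if'' direction of the lemma.

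To understand the zero set near the diagonal I would use the symmetry. If $(a-b)^k$ is the exact power of $(a-b)$ dividing $D$, write $D=(a-b)^k S$ with $S$ not identically zero on $\{a=b\}$; then equating $D(a,b)$ with $D(b,a)=(-1)^k(a-b)^k S(b,a)$ gives $S(a,b)=(-1)^k S(b,a)$, and on the diagonal this forces $(-1)^k=1$, i.e. $k$ is even. Hence $D=(a-b)^{2m}D_1$ with $D_1$ symmetric and $D_1|_{a=b}\not\equiv 0$, while the identity $D(a,a)\equiv 0$ gives $m\ge 1$. For $a\neq b$ the factor $(a-b)^{2m}$ is strictly positive, so the ``only if'' direction is equivalent to the single assertion that $D_1$ has no zero in $K$.

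The genuinely hard step is therefore to prove $D_1\neq 0$ on $K$, and my strategy is to show that $D_1$ is sign-definite there. To exploit the box constraints I would re-expand $D_1$ in the four quantities $a,\ b,\ 1-2a,\ 1-2b$, each strictly positive on $K$, and attempt to exhibit $D_1$ (or $-D_1$) as a polynomial in them with nonnegative coefficients that are not all zero; such a certificate yields strict positivity, hence non-vanishing, on the whole open square. Since $D_1$ is symmetric, an alternative is to pass to $\sigma=a+b$, $\pi=ab$ and establish positivity on the image of $K$, which is cut out by the explicit inequalities $0<\sigma<1$, $0<\pi\le\sigma^2/4$, $\pi>\sigma/2-1/4$. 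Evaluating $D_1$ at one interior point with $a\neq b$ tells me in advance which sign to certify.

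The main obstacle is precisely this positivity certificate: the discriminant of a degree-$12$ polynomial is very large, so it must be produced and factored with a computer algebra system, and there is no guarantee that a single change of variables renders all coefficients of one sign. If the direct approach fails, I would fall back on a decomposition of $K$ into finitely many subregions and certify the sign of $D_1$ on each---for instance by fixing $a$ and applying Sturm's theorem to the one-variable polynomial $b\mapsto D_1(a,b)$ to show it retains constant sign, or by a sum-of-squares/Positivstellensatz representation. I would also check the behaviour on $\partial K$ and along the diagonal, to be sure that extracting the full power of $(a-b)$ has left a cofactor that stays bounded away from zero on all of $K$.
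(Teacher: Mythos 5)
Your opening moves are sound and agree with the paper: $D$ is symmetric in $(a,b)$, vanishes identically on the diagonal, and factors as an even power of $(a-b)$ times a symmetric cofactor $D_1$, so the lemma reduces to showing $D_1$ has no zero in $K$. The gap is in that core step, and it is not merely a matter of computational luck: your primary strategy provably cannot succeed. The paper's computation (see (\ref{resultant})) shows that $D$ is nonnegative with the same zero set as $(2b-1)^{12}(2a-1)^{12}(a-b)^{12}F(a,b)^2$, where $F(a,b)=40a^3-24a^2b-24ab^2+40b^3-12a^2+12ab-12b^2-6a-6b+5$. So your cofactor $D_1$ is, up to factors positive on $K$, the \emph{square} of a cubic, and $F(1/2,1/2)=0$: the curve $F=0$ has an isolated real (singular) point exactly at the corner of $\overline{K}$. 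Two consequences. First, your closing sanity check --- that $D_1$ ``stays bounded away from zero on all of $K$'' --- is false: $D_1\to 0$ as $(a,b)\to(1/2,1/2)$, so no certificate of positivity on the closed square (Handelman, Positivstellensatz with the box constraints) can exist. Second, no representation of $D_1$ with nonnegative coefficients in $a,b,1-2a,1-2b$ exists even for the open square. Indeed, put $u=1-2a$, $v=1-2b$. In any identity $D_1=\sum c_\alpha\, a^i b^j (1-2a)^k (1-2b)^l$ with $c_\alpha\ge 0$, the terms of minimal total $(u,v)$-degree come only from basis elements with $k+l$ minimal, each contributing $c_\alpha 2^{-i-j}u^k v^l$; these cannot cancel, so the lowest homogeneous part of $D_1$ at $(u,v)=(0,0)$ would have to have nonnegative coefficients. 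But in these variables $F=9(u^2-uv+v^2)-5(u^3+v^3)+3uv(u+v)$, so that lowest part is a positive multiple of $u^{12}v^{12}(u^2-uv+v^2)^2$ (the exact multiplicities are irrelevant: any power $(u^2-uv+v^2)^{2r}$ contains the monomial $u^{4r-1}v$ with coefficient $-2r$). The required nonnegativity fails, so the certificate you hope to compute does not exist.

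Your fallbacks do not repair this. A sum-of-squares certificate is vacuous here: $D_1$ is already essentially a perfect square, so SOS can only re-certify nonnegativity, whereas the content of the lemma is \emph{strict} positivity on $K$, i.e.\ the assertion that the real curve $F=0$ has no point in $K$ --- a statement about real points of a curve that no SOS decomposition of $D_1$ can express. Sturm's theorem applied for a fixed $a$ treats only finitely many values of $a$; making it uniform over all $a\in(0,1/2)$ needs parametric subresultant or CAD machinery that your sketch does not supply. What makes the lemma tractable, and what your plan never reaches, is the drop in degree that the explicit factorization provides: the paper works directly with the cubic $F$, rotates coordinates ($x-y=a\sqrt2$, $x+y=b\sqrt2$) to put the curve in the form $36(8x-\sqrt2)y^2+(8x+5\sqrt2)(2x-\sqrt2)^2=0$, checks that its unique singular point is isolated (Hessian test) and corresponds to $(1/2,1/2)\notin K$, that every regular point satisfies $x<\sqrt2/8$, and then excludes any intersection with $K$ by the elementary estimate $\psi(x)=320x^3-48\sqrt2\,x^2-24x+10\sqrt2>0$ on $\left(0,\sqrt2/8\right)$ ($\psi$ is decreasing there and positive at the right endpoint). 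Some reduction of this kind --- from the enormous discriminant to a low-degree factor whose real zero locus can be located by hand --- is the missing idea your proof needs.
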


\begin{proof}
Easy calculations show that  $D$  is non-negative,
moreover, $D$ has the same zeroes as the following polynomial:
\begin{equation}\label{resultant}
(2b-1)^{12}(2a-1)^{12}(a-b)^{12}\big(F(a,b)\big)^2,
\end{equation}
where
\begin{equation*}\label{F(a,b)}
F(a,b):=40a^3-24a^2b-24ab^2+40b^3-12a^2+12ba-12b^2-6a-6b+5.
\end{equation*}

Denote by $\gamma$ the curve  determined by $F(a,b)=0$ (see Fig. \ref{F}).
We will prove that $\gamma$ has no common point with the square $K$.

Changing the variables by the formula
$$
x-y=a\sqrt{2}, \quad x+y=b\sqrt{2}\,,
$$
we get a new equation for $\gamma$, from that we can express $y$ explicitely:
\begin{equation}\label{new_equation}
\widetilde{F}(x,y):=36\left(8x-\sqrt{2}\right)y^2+\left(8x+5\sqrt{2}\right)\left(2x-\sqrt{2}\right)^2=0.
\end{equation}

Note that the point $(x',y')=\left(\sqrt 2/2,0\right)$ belongs to $\gamma$,
moreover, this is an unique singular point of $\gamma$.
Since
$$
\widetilde{F}_{xx}\widetilde{F}_{yy}-\widetilde{F}_{xy}^2=3888>0
$$
at $(x',y')$, then $(x',y')$ is isolated according to the well-known result in differential geometry of planar curves.
It is clear that the point $(a,b)=(1/2,1/2) \notin K$  corresponds to $(x',y')$ in the initial variables.

It is obvious that every regular point of $\gamma$ satisfies the condition $x<x_0:=\sqrt{2}/8$.
Hence only we need is to show that
$\gamma$ can not intersect  the part of $K$, described by the conditions
$x \in\left (0,x_0\right)$,\, $-x<y<x$.
In fact, it suffices to prove the inequality
$x<\varphi (x)$,
where
$$
\varphi (x):= \frac{\sqrt2-2x}{6}\sqrt{\frac{8x+5\sqrt{2}}{\sqrt{2}-8x}}
$$
is a function determining a part of the curve $\gamma$ in (\ref{new_equation}).
Note that $\lim\limits_{x\rightarrow x_0-0}\varphi(x)=+\infty$.

It is easy to show that the inequality  $x<\varphi (x)$ is equivalent to the inequality
$$
\psi(x):=320x^3-48\sqrt 2\, x^2-24x+10\sqrt 2>0,
$$
which holds for all $x \in\left (0,x_0\right)$, since  $\psi(x)$ is positive at $x=x_0$ and decreases:
$$
  \psi\left(x_0\right)=27\sqrt2/4>0,  \qquad \psi'(x)=960x^2-96\sqrt 2x-24<0.
$$

Therefore, $F(a,b)\ne 0$  for $(a,b)\in K$.
Hence there is a unique possibility $a=b$  in order to $D=0$ in $K$ by (\ref{resultant}).
\end{proof}
\bigskip

\begin{lem} \label{multiple_isnot_extremum} Let $(a,b)\in K$.
Then a point of local extremum of $p(t)$ can not be a multiple root of $p(t)$.
\end{lem}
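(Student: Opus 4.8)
The plan is to reduce the statement to the diagonal $a=b$ by means of Lemma \ref{Discrim_of_p(t)}, and then to analyse that one-parameter situation directly. First I would observe that if $t_0$ were simultaneously a point of local extremum and a multiple root of $p(t)$, then necessarily $p(t_0)=0$ and $p'(t_0)=0$, so $p(t)$ would possess a repeated root and hence its discriminant $D$ would vanish. By Lemma \ref{Discrim_of_p(t)}, for $(a,b)\in K$ this forces $a=b$. Consequently, for every $(a,b)\in K$ with $a\neq b$ the polynomial $p(t)$ has only simple roots, no multiple root exists at all, and the assertion holds vacuously. Thus the whole difficulty is concentrated on the single case $a=b$.

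For $a=b$ I would substitute $b=a$ into the explicit expression for $p(t)$ and study the resulting polynomial $p_a(t):=p(t)\big|_{b=a}$, whose coefficients depend on the single parameter $a\in(0,1/2)$. The repeated roots of $p_a(t)$ are exactly the roots of $\gcd\big(p_a(t),p_a'(t)\big)$, so I would compute this greatest common divisor with Maple and factor $p_a(t)$ accordingly. The aim of this step is to exhibit $p_a(t)$ in a factored form $p_a(t)=c(a)\,r(t,a)^{m}\,q(t,a)$ in which the repeated factor $r$ and its exponent $m$ are displayed explicitly, while $q$ is coprime to $r$ and carries only simple roots; I expect the repeated factor to appear raised to an \emph{odd} power.

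With such a factorization in hand, the conclusion follows by inspecting the repeated factor. A real root of $p_a$ is a local extremum precisely when its multiplicity is even, so it suffices to show that every real repeated root of $p_a(t)$ has odd multiplicity; should the repeated root instead turn out to be non-real, then it is automatically irrelevant for local extrema and the lemma holds at once. The main obstacle is exactly this last verification: one must determine, uniformly for all $a\in(0,1/2)$, both the reality and the parity of the multiplicity of the repeated root forced by $D=0$. I anticipate this reducing to an analysis of the roots of a single low-degree factor as functions of $a$ — showing either that they remain complex throughout $(0,1/2)$, or that the corresponding real root is of odd (presumably triple) multiplicity, so that $p_a$ changes sign there and the point cannot be an extremum. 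In either case the assumed coincidence of a local extremum with a multiple root is contradicted, which completes the proof.
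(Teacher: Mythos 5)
Your proposal is correct and takes essentially the same route as the paper: reduce to the diagonal $a=b$ via Lemma \ref{Discrim_of_p(t)}, factor the resulting one-parameter polynomial, and rule out extrema by the parity of the multiplicity of any repeated real root. The paper's explicit factorization $p(t)=-(t+1)\,p_2(t)\,p_3^3(t)$ confirms your anticipation exactly: the quadratic $p_2$ has negative discriminant (no real roots), while the cubed factor $p_3$ has positive discriminant (three distinct real roots), so every multiple root is real of multiplicity three and $p$ changes sign there.
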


\begin{proof}
Multiple roots of $p(t)$ are possible only for $a=b$ by Lemma \ref{Discrim_of_p(t)}.
Therefore, we may assume that $b=a$.
Then   (\ref{polynom12}) takes the following form
$$
p(t)=-(t+1)\,p_2(t)\,p_3^3(t),
$$
\begin{eqnarray*}
p_2(t) &:=& (2+4a)t^2-2(1+2a)t+1, \\
p_3(t) &:=& 8a^2(2a+1)t^3-(1+4a)t+1.
\end{eqnarray*}

Denote by  $D_2$ and $D_3$ the discriminants of  $p_2(t)$ and $p_3(t)$ respectively:
\begin{eqnarray*}
D_2&:=&4(2a+1)(2a-1),\\
D_3&:=&-32(2a+1)(2a-1)(22a^2+14a+1)a^2.
\end{eqnarray*}

Since  $D_2<0,\, D_3>0$ for $a\in (0,1/2)$, then it is clear that
the polynomial  $p(t)$ has exactly three distinct real roots (each of multiplicity $3$) for every such $a$.
It follows from this fact that there is no points of local extrema of $p(t)$ among the roots of $p(t)$.
\end{proof}

\smallskip
Further we need  the curve $\Gamma$ (see Fig. \ref{pictur1}),
which can be obtained as a result of the intersection $\Omega$ with the plane $a_3=1/2$ for $0<a_1,a_2\le 1/2$.
Recall some properties of $\Gamma$ (see details in  \cite{AANS2}):
$\Gamma$ determined by the equality $G(a_1,a_2)=0$, where
\begin{multline}\label{G_ab}
G(a_1,a_2):=4(a_1+a_2)(4a_1a_2-1)(4a_1a_2-a_1-a_2+1)(4a_1a_2+a_1+a_2+1)\\
+(16a_1^2a_2^2+1)(13a_1^2+22a_1a_2+13a_2^2)-4(a_1^2+a_2^2)(11a_1^2+18a_1a_2+11a_2^2),
\end{multline}
$\Gamma$ is homeomorphic to the segment $[0,1]$ with the endpoints $(\sqrt 2/4, 1/2)$, $(1/2, \sqrt 2/4)$ and
with the unique singular point (a cusp) at
 $(a_1,a_2)=(\tilde a, \tilde a)$, where
 $\tilde a:=(\sqrt5 -1)/4\approx 0.3090169942$.

It is easy to check that $\Gamma$ separates  $K$ into disjoint connected components  $K_1$ and $K_2$
containing the points
$$
(a',b'):=(3/10,3/10)\quad \mbox{ and }\quad (a'',b''):=(31/100,31/100)
$$ respectively.

 \begin{center}
\begin{figure}[t]
\centering
\scalebox{1}[1]{\includegraphics[angle=-90,totalheight=3in]
{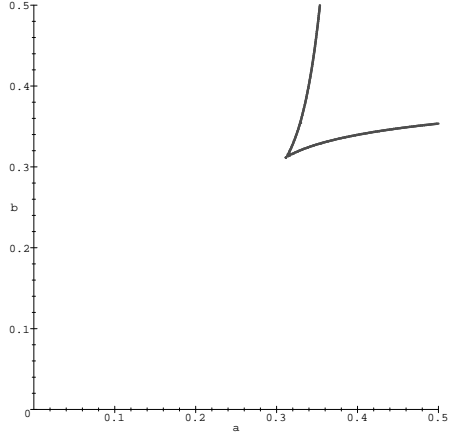}}
\caption{The intersection $\Omega$ with the plane $a_3=1/2$ for $0<a_1,a_2\le 1/2$}
\label{pictur1}
\end{figure}
\end{center}
\bigskip

\begin{lem} \label{number_ofRoots_of_p(t)}
In the segment $[0,1]$, the polynomial $p(t)$  has
\begin{enumerate}
\item
one root, if  $(a,b)\in K_1$;
\item
two distinct roots, if  $(a,b)\in K_2 \cup \Gamma$.
\end{enumerate}
\end{lem}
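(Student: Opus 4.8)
The plan is to count the roots of $p$ in $(0,1)$ by tracking how this count can change as $(a,b)$ moves through $K$, and then to fix its actual value on each of $K_1$ and $K_2$. First I would record the two endpoint values. From the explicit expression one reads off $p(0)=-1<0$, so $t=0$ is never a root. On the other hand $p(1)=Q(a,b,1/2)$, which by the definition of $\Gamma$ vanishes precisely when $(a,b)\in\Gamma$ and therefore keeps a constant sign on each of the connected sets $K_1$ and $K_2$. Evaluating on the diagonal through the factorization below gives $p(1)>0$ on $K_1$ and $p(1)<0$ on $K_2$; in particular $p(0)p(1)<0$ on $K_1$ and $p(0)p(1)>0$ on $K_2$, so the number of roots of $p$ in $(0,1)$, counted with multiplicity, is odd on $K_1$ and even on $K_2$.

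Next I would run a continuity argument. By Lemma \ref{Discrim_of_p(t)} the discriminant $D$ vanishes in $K$ only on the diagonal $\{a=b\}$, so for $a\neq b$ every root of $p$ is simple; moreover $p(0)\neq0$ everywhere and $p(1)\neq0$ off $\Gamma$. Hence, as $(a,b)$ varies, a simple root can leave $(0,1)$ only through the endpoint $t=1$ (which forces $(a,b)\in\Gamma$), and two real roots can merge only where $D=0$ (which forces $a=b$). Consequently the number of roots of $p$ in $(0,1)$ is locally constant on $K\setminus(\{a=b\}\cup\Gamma)$, hence constant on each of its connected pieces $K_i^{\pm}:=K_i\cap\{\pm(b-a)>0\}$, $i=1,2$. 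Since $Q$ is symmetric, $p$ is invariant under $a\leftrightarrow b$, so the count on $K_i^{+}$ equals that on $K_i^{-}$; it therefore suffices to determine one value in $K_1$ and one in $K_2$.

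To evaluate these I would use the factorization $p(t)=-(t+1)\,p_2(t)\,p_3^3(t)$ from the proof of Lemma \ref{multiple_isnot_extremum}, valid on the diagonal: $p_2$ has no real root, while $p_3$ has three simple real roots $\rho_1<0<\rho_2<\rho_3$ (because their product $-1/\bigl(8a^2(2a+1)\bigr)<0$ and their sum is $0$). Since $p_3(0)=1>0$ and $p_3(1)=4a(4a^2+2a-1)$ changes sign exactly at $a=\tilde a$, one obtains $\rho_2\in(0,1)<\rho_3$ with $\rho_3>1$ for $a<\tilde a$, while for $\tilde a<a<1/2$ one checks (e.g. from the limit $a\to1/2$, where $p_3\to(t+1)(2t-1)^2$, and the fact that $p_3(1)>0$ has no zero on $(\tilde a,1/2)$) that $0<\rho_2<\rho_3<1$. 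Thus on the diagonal $p$ has exactly one distinct root in $(0,1)$ on the $K_1$-side (namely $\rho_2$) and exactly two on the $K_2$-side (namely $\rho_2,\rho_3$).

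It remains to transfer these counts off the diagonal, and this is the step I expect to be the main obstacle. The diagonal is precisely the locus $D=0$, so it is a genuine bifurcation set: each triple root splits into three nearby roots, and parity together with the sign of $p(1)$ only tells me that an odd number of the triple roots complexifies --- it does not by itself decide whether $\rho_2$ (resp. $\rho_2,\rho_3$) stays real or is replaced by a complex pair, i.e. whether the distinct count remains $1$ (resp. $2$) or jumps to $3$ (resp. $4$). To settle this I would compute the number of roots of $p$ in $(0,1)$ at one representative off-diagonal point of each region, by a Sturm sequence or by Descartes' rule after the substitution $t\mapsto s/(1+s)$, confirming the values $1$ and $2$; by the constancy of Step two these then hold throughout $K_1$ and $K_2$. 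Finally I would treat the curve $\Gamma$ for part (2): there $p(1)=0$, so $t=1$ is a root, simple away from the cusp, and letting $(a,b)\to\Gamma$ from $K_2$ one of the two interior roots tends to $t=1$ while the other stays interior, leaving two distinct roots in $[0,1]$; the cusp $(\tilde a,\tilde a)$ is checked directly from the factorization, where $\rho_3=1$ and $\rho_2\in(0,1)$ again give two.
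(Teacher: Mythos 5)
Your proposal shares the paper's overall skeleton---Lemma~\ref{Discrim_of_p(t)} confines multiple roots to the diagonal $\{a=b\}$, the factorization $p(t)=-(t+1)\,p_2(t)\,p_3^3(t)$ is exploited there, the root count is shown to be locally constant away from exceptional sets, and representative points fix its value---but it departs from the paper at the decisive step, and the departure is substantive. The paper takes its representative points \emph{on} the diagonal, at $(3/10,3/10)$ and $(31/100,31/100)$, where the roots are computed exactly from cubics, and then spreads the count over all of $K_1$ and $K_2$ by its ``robust root'' argument (no roots at $t=0,1$ off $\Gamma$, no local-extremum roots by Lemma~\ref{multiple_isnot_extremum}). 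You instead single out the diagonal as the dangerous set: it is precisely the discriminant locus, each triple root can a priori split into three real roots or into one real root plus a conjugate pair, and---as you rightly observe---neither the parity of $p(0)p(1)$, nor the $a\leftrightarrow b$ symmetry, nor persistence of roots decides between the two splittings, so the number of \emph{distinct} roots could in principle jump from $1$ to $3$ across the diagonal even though every root is ``robust''. Accordingly you treat the diagonal by the exact cubic analysis (your sign discussion of $p_3(1)=4a(4a^2+2a-1)$ together with the deformation to $a=1/2$ is correct, and covers the whole diagonal rather than two sample points) and propose to pin down the off-diagonal value independently by a Sturm count at an off-diagonal rational point of each region. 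This is a genuinely different organization, and it is more careful exactly where the paper is terse, since the paper's constancy claim is applied across the discriminant locus without comment; what the paper's route buys is brevity and exact roots, while yours buys a head-on treatment of the diagonal crossing at the price of a certified computation on a degree-$12$ polynomial.

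Two steps of your plan, however, are still assertions rather than proofs. First, the off-diagonal Sturm counts are the linchpin of your whole argument, and you only state their outcome (``confirming the values $1$ and $2$''); until they are exhibited, your argument shows only that the count is $1$ or $3$ on $K_1$ and an even number on $K_2$. Second, your treatment of $\Gamma$ is one-sided: approaching from $K_2$ alone cannot exclude the alternative in which both interior roots stay interior while the simple root of the limit polynomial at $t=1$ is the limit of a root coming from outside $[0,1]$, which would give \emph{three} roots on $\Gamma$. The repair is to approach from $K_1$ as well: away from the cusp all roots on $\Gamma$ are simple by Lemma~\ref{Discrim_of_p(t)}, so each interior root of the limit polynomial attracts exactly one root from either side; the $K_1$ count then bounds the number of interior roots on $\Gamma$ above by one, and the $K_2$ count bounds it below by one, giving exactly two roots in $[0,1]$. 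With these two items supplied, your plan becomes a complete and correct proof.
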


\begin{proof}
Let $t^\ast\in [0,1]$ be a root of  $p(t)$ given by (\ref{polynom12}).
We say that  $t^\ast$ is a robust root of $p(t)$  in~$[0,1]$, if
small  perturbations of the parameters
$a$ and $b$ imply a small perturbation of $t^\ast$ keeping it
in $(t^\ast-\varepsilon, t^\ast+\varepsilon)\subset[0,1]$ for some small $\varepsilon>0$
(see e.~g.~\cite{Bruce} for more details on singularities of curves and some related problems).

\smallskip
Now, assume that  $t^\ast$ is a non-robust root of $p(t)$.
Then there exist exactly two possibility (recall that $t^\ast\in [0,1]$):
\smallskip

{\it Case 1.}  $t^\ast=0$ or $t^\ast=1$;
\smallskip

{\it Case 2.}
$t^\ast$ belongs to the interval $(0,1)$ and provides  $p(t)$ a local extremum.

\medskip
Now, we consider these cases separately.
\smallskip

{\it Case 2.}  Assume that   $t^\ast$ is a point of local extremum of  $p(t)$.
Then  $t^\ast$ is a multiple root of $p(t)$.
This contradicts to Lemma  \ref{multiple_isnot_extremum}, hence, the case~2 is impossible.
\smallskip

{\it Case  1.}
Since $p(0)=-1$ then there exists no pair $(a,b)$ such that  $t=0$ is a root of  $p(t)$.

Suppose that $t=1$ is a root of  $p(t)$.
Since
$$
p(1)=-4(a+b)^2G(a,b),
$$
where $G$ is given by (\ref{G_ab}),
then the equality $p(1)=0$ is possible if and only if $G(a,b)=0$.

Recall that the curve $\Gamma$ is determined by $G(a,b)=0$.
Since $p(t)$ has only robust roots for every pair $(a,b)\in K_1\cup K_2$ by our construction,
then the number of roots of $p(t)$ in $[0,1]$ is constant both in $K_1$ and in $K_2$.
Hence, it is sufficient to calculate the number of such roots
only for the representative points $(a',b')\in K_1$ and $(a'',b'')\in K_2$.
\smallskip

$(1)$ Suppose that  $(a,b)=(a',b')\in K_1$.
Then (\ref{polynom12}) takes the following form
$$
p(t)= -\frac{1}{9765625}(t+1)(16t^2-16t+5)(144t^3-275t+125)^3.
$$

Taking into account Lemma \ref{multiple_isnot_extremum}, we conclude that
$p(t)$ has three distinct real roots of multiplicity  $3$  besides the root  $t=-1$.
Since we does not need exact values of these roots then
their approximated values are:
$$
-1.569348118, \quad 0.5345099430, \quad 1.034838175.
$$

\smallskip

$(2)$ Now, suppose that $(a,b)=(a'', b'')\in K_2$.
Then  in (\ref{polynom12}) we obtain
$$
p(t) = -\frac{1}{6103515625000000}(t+1)(81t^2-81t+25)(77841t^3-140000t+62500)^3,
$$
with the following real roots (of multiplicity $3$):
$$
 -1.524828329\dots,\quad 0.5285082631\dots,\quad 0.9963200660\dots.
$$

It is easy to see that  for $(a,b) \in \Gamma$ the polynomial  (\ref{polynom12}) has two
roots in $[0,1]$, one of which is $1$ by the definition of $\Gamma$.

Hence, in the segment $[0,1]$, the polynomial (\ref{polynom12}) has one root for $(a,b)\in K_1$ and two roots for $(a,b)\in K_2 \cup \Gamma$.
\end{proof}
\medskip

\bigskip

{\bf Proof of Theorem  \ref{main_thm}} is based on Lemma  \ref{number_ofRoots_of_p(t)} and Remark \ref{Idea}.
Let  $(a,b)\in K$. Then the number of intersection points of $\Omega$
with the segment  $I$ equals to $1$ or $2$  depending on the number of roots of the polynomial $p(t)$ (see (\ref{polynom12}))
containing in $[0,1]$.

\smallskip

{\bf (1)} {\it Connectedness of the set $(0,1/2)^3 \cap \Omega$}.
Let $t_1,t_2$ be roots of $p(t)$ such that $0<t_1 < t_2\le 1$.
Then, obviously,  $t_1$ and $t_2$ correspond to the ``lower'' and ``upper'' (see Fig.~\ref{singsur_new}) parts of the surface $\Omega \cap (0,1/2)^3$ respectively.
These parts of $\Omega$ have a unique common point $(a_1,a_2,a_3)=(1/4,1/4,1/4)$
(an {\it elliptic umbilic} of $\Omega$ according to  \cite{AANS}).
\smallskip

{\bf (2)} {\it The number of the connected components of the set $(0,1/2)^3 \setminus \Omega$}.
Since the maximal number of roots of  $p(t)$  in $[0,1]$ is equal to $2$ and $\Omega \cap (0,1/2)^3$ is the union of two surfaces with one common point,
then the number of connected components of  $(0,1/2)^3 \setminus \Omega$ equals to~$3$. Theorem  \ref{main_thm} is proved.

\bigskip
In order to prove Corollary   \ref{main_cor} we need the following

\begin{lem}\label{number_ofRoots_of_p(t)_b=1/2}
Let $b=1/2$. Then in the segment $[0,1]$, the polynomial $p(t)$  has
\begin{enumerate}
\item
one root for  $a\in \left(0,\sqrt 2/4\right)$;
\item
two roots for $a\in \left[\sqrt 2/4,1/2\right)$;
\item
one root (of multiplicity  $8$) for  $a=1/2$.
\end{enumerate}
\end{lem}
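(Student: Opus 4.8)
The key simplification is that the hypothesis $b=1/2$ makes the parametrization (\ref{parametrization}) symmetric in the roles of $a_2$ and $a_3$, and the boundary cases correspond to special configurations of the curve $\Gamma$. The plan is to specialize the polynomial $p(t)=Q(at,t/2,t/2)$ and then count its roots in $[0,1]$ exactly as in the proof of Lemma~\ref{number_ofRoots_of_p(t)}, but now using the one-parameter family indexed by $a\in(0,1/2]$ rather than the two-parameter family over $K$. First I would substitute $b=1/2$ into (\ref{polynom12}) using Maple to obtain an explicit degree-$12$ polynomial in $t$ whose coefficients are polynomials in the single parameter $a$; I expect, by analogy with the $a=b$ case in Lemma~\ref{multiple_isnot_extremum}, that this polynomial factors into low-degree pieces whose discriminants can be controlled directly.

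Next I would locate the transition values. The natural candidate for the boundary between one root and two roots is $a=\sqrt2/4$, which is precisely the $a_1$-coordinate of an endpoint of the curve $\Gamma$ described just before Lemma~\ref{number_ofRoots_of_p(t)} (the endpoints of $\Gamma$ are $(\sqrt2/4,1/2)$ and $(1/2,\sqrt2/4)$). Since $p(1)=-4(a+b)^2\,G(a,b)$ from the proof of Lemma~\ref{number_ofRoots_of_p(t)}, setting $b=1/2$ gives $p(1)=-4(a+1/2)^2\,G(a,1/2)$, so $t=1$ is a root exactly when $G(a,1/2)=0$, i.e.\ when $(a,1/2)\in\Gamma$. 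Solving $G(a,1/2)=0$ should isolate $a=\sqrt2/4$ as the relevant value in $(0,1/2)$, explaining why a second root enters (or leaves) the interval $[0,1]$ precisely there. For part~(1), $a\in(0,\sqrt2/4)$ corresponds to the side of $\Gamma$ where $p$ has a single root in $[0,1]$, and for part~(2), $a\in[\sqrt2/4,1/2)$ corresponds to the side carrying two roots, with $t=1$ itself appearing as the second root exactly at $a=\sqrt2/4$.

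For the endpoint $a=1/2$ in part~(3), I would substitute $a=b=1/2$ directly into the factored form $p(t)=-(t+1)\,p_2(t)\,p_3^3(t)$ from Lemma~\ref{multiple_isnot_extremum}. At $a=1/2$ the discriminants satisfy $D_2=4(2a+1)(2a-1)=0$ and $D_3=-32(2a+1)(2a-1)(22a^2+14a+1)a^2=0$, so both quadratic and cubic factors degenerate: $p_2$ acquires a double root and $p_3$ a multiple root, and one should check these collapse onto a common value of $t$ so that the total multiplicity at that point is $8$ (the factor $p_3$ is cubed, contributing the bulk of the multiplicity). I would verify by explicit substitution that $p(t)$ reduces to a constant times $(t+1)$ times a perfect eighth power in $t$, confirming the single root of multiplicity~$8$.

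The main obstacle is establishing the root count on each open interval rigorously rather than pointwise. I would argue as in Lemma~\ref{number_ofRoots_of_p(t)}: the root count is locally constant on any interval free of the transition value $a=\sqrt2/4$, because a root can leave $[0,1]$ only by crossing an endpoint ($t=0$ is excluded since $p(0)=-1$ independently of $a$, and $t=1$ requires $G(a,1/2)=0$) or by becoming a non-simple interior root (excluded by Lemma~\ref{multiple_isnot_extremum} away from $a=b$, together with a direct check that $a=b=1/2$ is the only coincidence for $b=1/2$). Granting this robustness, it suffices to evaluate the count at one representative $a$ in each of $(0,\sqrt2/4)$ and $(\sqrt2/4,1/2)$ by computing the numerical roots of the specialized polynomial, exactly as the representative points $(a',b')$ and $(a'',b'')$ were used before.
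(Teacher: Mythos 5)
Your overall skeleton is the same as the paper's: specialize (\ref{polynom12}) at $b=1/2$, expect a factorization into low-degree pieces, locate the transition value through $p(1)=-4(a+b)^2G(a,b)$, and check the degenerate case $a=b=1/2$ by direct substitution. Those ingredients are all sound: one computes $G(a,1/2)=4(a+1/2)^2(8a^2-1)$, so $t=1$ is a root exactly at $a=\sqrt 2/4$, and at $a=b=1/2$ the polynomial indeed collapses to $p(t)=-(t+1)^4(2t-1)^8$, giving part (3).

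The genuine gap is in the step you use to make the count rigorous on the open intervals: you transplant the robustness argument of Lemma~\ref{number_ofRoots_of_p(t)} by invoking Lemma~\ref{Discrim_of_p(t)} and Lemma~\ref{multiple_isnot_extremum} ``away from $a=b$''. Both lemmas are stated and proved only for $(a,b)\in K=(0,1/2)^2$, and on the line $b=1/2$ the dichotomy underlying them is actually false: the factor $(2b-1)^{12}$ in (\ref{resultant}) forces the discriminant $D$ of $p$ to vanish identically at $b=1/2$, for every $a$. Equivalently, the specialization factors as $p(t)=-(2ta+1)\,p_2(t)\,p_3(t)^3$ with $p_2(t)=4a(2a+1)t^2-2(1+2a)t+1$ and $p_3(t)=2(1+2a)t^3-2(a+1)t+1$, so $p$ has multiple roots for \emph{every} $a\in(0,1/2)$, not only when $a=b$; your proposed patch (``$a=b=1/2$ is the only coincidence'') does not address this, and Lemma~\ref{multiple_isnot_extremum}, whose proof reduces to the case $a=b$ via Lemma~\ref{Discrim_of_p(t)}, gives you nothing here. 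The repair is the direct analysis your first paragraph gestures at, and it is exactly what the paper does: the factor discriminants satisfy $D_2=-4(2a-1)(2a+1)>0$ and $D_3=4(2a+1)(2a-1)(8a^2+28a+11)<0$ on $(0,1/2)$; since the cubic $p_3$ has a positive local maximum at a negative value of $t$, its unique real root is negative, so every root of $p$ in $[0,1]$ comes from the quadratic $p_2$; finally, $p_2(0)=1>0$, $p_2(1)=8a^2-1$, and the vertex of $p_2$ lies at $t=1/(4a)$, from which one reads off that $p_2$ has one root in $[0,1]$ for $a\in\left(0,\sqrt 2/4\right)$ and two for $a\in\left[\sqrt 2/4,1/2\right)$. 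With that in hand, no locally-constant-count or representative-point argument is needed at all.
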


\begin{proof} $(1),(2)$ At  $b=1/2$, $a\in (0,1/2)$ we have
$$
p(t)=-(2ta+1)\,p_2(t)\,p_3^3(t)
$$
in (\ref{polynom12}), where
\begin{eqnarray*}
p_2(t) &:=& 4a(2a+1)t^2-2(1+2a)t+1,\\
p_3(t) &:=&2 (1+2a)t^3-2(a+1)t+1.
\end{eqnarray*}

For the discriminants  $D_2$ and $D_3$ of the polynomials $p_2(t)$ and $p_3(t)$ we have
\begin{eqnarray*}
D_2&:=&-4(2a-1)(2a+1)>0,\\
D_3&:=&4(2a+1)(2a-1)(8a^2+28a+11)<0.
\end{eqnarray*}

Since
the cubic polynomial $p_3(t)$ achieves a positive local maximum at the point $t=-\frac{(6a+3)(a+1)}{6a+3}<0$,
then its unique real root must be a negative number.
Therefore, required roots of  $p(t)$ can be provided only by $p_2(t)$,
moreover, first of them belongs to  $[0,1]$ for all  $a\in(0,1/2)$;
the second of them   --- only for $a\in \left[\sqrt 2/4,1/2\right)$.

\smallskip

$(3)$ The case $b=a=1/2$ leads (\ref{polynom12}) to the polynomial
$$
p(t)=-(t+1)^4(2t-1)^8
$$
with the unique root $t=1/2$ of multiplicity  $8$ on $[0,1]$.
It should be noted that  we get an elliptic umbilic  $(a_1,a_2,a_3)=(1/4,1/4,1/4)$
of the surface $\Omega$ in this case.
\end{proof}

\bigskip

{\bf Proof of Corollary  \ref{main_cor}}.  According to Theorem  \ref{main_thm}
it is sufficient to consider the case when $a=1/2$ or $b=1/2$.
Taking into account Remark  \ref{symm_Omega}, assume without loss of generality that  $b=1/2$.
Then the proof of Corollary  \ref{main_cor}  follows from Lemma \ref{number_ofRoots_of_p(t)_b=1/2}
and Remark \ref{Idea}.

\begin{remark}
When this paper has been written the author was informed about the recent preprint \cite{Batkhin},
where a more detailed description of the surface $\Omega$ was obtained
without the restriction $(a_1,a_2,a_3)\in (0,1/2)^3$.
\end{remark}

\bigskip

The author is indebted to Prof. Yu.\,G.\,Nikonorov and to Prof. A.\,Arvanitoyeorgos for helpful discussions concerning this paper.

\vspace{10mm}

\bibliographystyle{amsunsrt}

\vspace{5mm}
\end{document}